\title[Positivity of Hochster Theta over $\mathbb{C}$]{Positivity of Hochster Theta over $\mathbb{C}$}
\author{Mohammad Reza Rahmati}
\thanks{}
\address{Centro de Investigacion en Matematicas, A.C.
\hfill\break 
\hfill\break \\
\hfill\break }
\email{mrahmati@cimat.mx}
\newcommand{\comments}[1]{}
\newtheorem{theorem}{Theorem}[section]
\newtheorem{corollary}[theorem]{Corollary}
\newtheorem{definition}[theorem]{Definition}
\newtheorem{example}[theorem]{Example}
\keywords{Matrix factorization, Riemann-Hodge bilinear relations, Residue pairing, Cyclic homology}
\subjclass{14Lxx}
\begin{document}

\begin{abstract}
M. Hochster defines an invariant namely $\Theta(M,N)$ associated to two finitely generated module over a hyper-surface ring $R=P/f$, where $P=k\{x_0,...,x_n\}$ or $k[X_0,...,x_n]$, for $k$ a field and $f$ is a germ of holomorphic function or a polynomial, having isolated singularity at $0$. This invariant can be lifted to 
the Grothendieck group $G_0(R)_{\mathbb{Q}}$ and is compatible with the chern character and cycle class map, according to the works of W. Moore, G. Piepmeyer, S. Spiroff, M. Walker. They prove that it is semi-definite when $f$ is a homogeneous polynomial, using Hodge theory on Projective varieties. It is a conjecture that the same holds for general isolated singularity $f$. We give a proof of this conjecture using Hodge theory of isolated hyper-surface singularities when $k=\mathbb{C}$. We apply this result to give a positivity criteria for intersection multiplicty of proper intersections in the variety of $f$. 
\end{abstract}

\maketitle


\section*{Introduction}

\vspace{0.5cm}

One of the important ways to generalize known facts about smooth algebraic varieties in algebraic geometry, is to try to extend them over singular varieties. Most of the valid methods in the smooth category are hard to be worked out in the singular category. An example of this is the definition of intersection multiplicity as the $Tor$-formula in algebraic geometry as an Euler characteristic. The Riemann-Roch formula provides a definition of this characteristic class by the chern character map on the $K$-group. The useful observation here is the intersection theory is in fact a theory on $K$-groups. In the singular set up both of the definitions of $K$-groups and the chern character become very complicated in the first glance. although trying to generalize the homology theory over singular varieties provides some type of solution for this case, trying to extend the natural concepts from the smooth category to the singular one remains still complicated and difficult. 

\vspace{0.5cm}

In this short note we provide a very basic example of this situation and try to approach to the solution using asymptotic Hodge theory. In this way we employ some standard methods in the extensions of polarized mixed Hodge structure to answer questions about singularities in algebraic geometry. Thus the reader may divide the text into three part, the first is a singular set up in algebraic geometry, the second is on variation of mixed Hodge structures, and the third is an application of the results mentioned in part 2.     

\vspace{0.5cm}

\section{Modules over Hypersurface rings}

\vspace{0.5cm}

A hyper-surface ring is a ring of the form $R:=P/(f)$, where $P$ is an arbitrary ring and $f$ a non-zero divisor. Localizing we may assume $P$ is a local ring of dimension $n+1$. As according to the title we assume $P=\mathbb{C}\{x_0,...,x_n\}$ and $f$ a holomorphic germ, or $P=\mathbb{C}[x_0,...,x_n]$ and then $f$ would be a polynomial. Then we are mainly interested to study finitely generated modules over these rings. Consider $f:\mathbb{C}^{n+1} \to \mathbb{C}$ in this form, and choose a representative for the Milnor fibration as $f:X \to T$, where $T$ is the disc around $0$. 

\vspace{0.5cm}

\noindent
\textit{Then, through all the rest of this text we assume $0 \in \mathbb{C}^{n+1}$ is the only singularity of $f$}. 

\vspace{0.5cm}

A matrix factorization of $f$ in $P$ is a pair of matrices $A$ and $B$ such that $AB=BA=f.\ id$. It is equivalent to the data of a pair of finitely generated free $P$-modules 

\[ d_0:X^0 \leftrightarrows X^1:d_1 , \qquad d_0d_1=d_1d_0=f. \ id \] 

\vspace{0.5cm}

\noindent
It is a basic fact, discovered by D. Eisenbud, that the $R$-modules have a minimal resolution that is eventually 2-periodic. Specifically, in a free resolution of such a module $M$, we see that after n-steps we have an exact sequence of the following form. 

\begin{equation}
0 \to M' \to F_{n-1} \to F_{n-2} \to ... \to F_0 \to M \to 0
\end{equation}

\vspace{0.5cm}

\noindent
where the $F_i$ are free $R$-modules of finite rank and $depth_R(M')=n$. If $M'=0$ then $M$ has a free resolution of finite length., If $M' \ne 0$, then $M'$ is a maximal Cohen-Macaulay module, that is $depth_R(M')=n$. So ''up to free modules'' any $R$-module can be replaced by 
a maximal Cohen-Macaulay module. If $M$ is a maximal Cohen-Macaulay $R$-module that is minimally generated by $p$ elements, its resolution as $P$-module has the form

\vspace{0.5cm}

\begin{center}

$\begin{array}[c]{ccccccccc}
0 &\rightarrow &P^p &\stackrel{A}{\rightarrow} &P^p &\rightarrow &M &\rightarrow& 0\\
&&\downarrow&\stackrel{B}{\swarrow}&\downarrow&&\downarrow{0}&&\\
0 &\rightarrow &P^p &\stackrel{A}{\rightarrow} &P^p &\rightarrow &M &\rightarrow& 0
\end{array}$

\end{center}

\vspace{0.5cm}

\noindent
where $A$ is some $p \times p$ matrix with $det(A)=f^q$. The fact that multiplication by $f$ acts as $0$ on $M$ produces a matrix $B$ such that $A.B=B.A=f.I$, where $I$ is the identity matrix. In other words, we find a matrix factorization $(A,B)$ of $f$ determined uniquely up to base change in the free module $P^p$, by $M$. The matrix factorization not only determines $M$ but also a resolution of $M$ as $R$-module. 

\vspace{0.5cm}

\begin{center}
$....\to R^p \to R^p \to R^p \to M \to 0$.
\end{center}

\vspace{0.5cm}

\noindent
So a minimal resolution of $M$ looks in general as follows

\vspace{0.5cm}

\begin{center}
$... \to G \to F \to G \to F_{n-1} \to ... \to F_0 \to M \to 0$.
\end{center}

\vspace{0.5cm}

\noindent
As a consequence all the homological invariants like $Tor_k^R(M,N), Ext_R^k(M,n)$ are 2-priodic, \cite{BVS}, \cite{EP}. 

\vspace{0.5cm}

The category of matrix factorizations of $f$ over $R$, namely $MF(R,f)$; is defined to be the differential $\mathbb{Z}/2$-graded category, whose objects are pairs $(X,d)$, where $X=X^0 \oplus X^1$ is a free $\mathbb{Z}/2$-graded $R$-module of finite rank equipped with an $R$-linear map $d$ of odd degree satisfying $d^2=f.\ id_X$. Here the degree is calculated in $\mathbb{Z}/2$. Regarding to the first definition 

\[ d= \left( 
\begin{array}{cc}
0  &  d_0\\
d_1 &   0
\end{array} \right), \qquad d^2=f.\ id. \]

\vspace{0.5cm}

\noindent
The morphisms $MF(X,X')$ are given by $\mathbb{Z}/2$-graded $R$-module maps from $X$ to $X'$ (or equivalent between the components $X^0$ and $X^1$) provided that the differential is given by
 
\begin{equation}
d(f)=d_{X'} \circ f - (-1)^{\mid f \mid} f \circ d_X.
\end{equation}

\vspace{0.5cm}

\noindent
Here $d_{X}$ or $d_X^{\prime}$ may be considered as the matrix given above or to be separately $d_0$ and $d_1$, and also it is evident that $d(f)^2=0$. By choosing bases for $X^0$ and $X^1$ we reach to the former definition, \cite{EP}. 

\vspace{0.5cm}

\section{Hochster Theta Function}

\vspace{0.5cm}

M. Hochster in his study of direct summand conjecture defined the following invariant for the hypersurface ring $R=P/(f)$, namely $\Theta$-invariant.

\vspace{0.5cm}
 
\begin{definition}(Hochster Theta pairing)
The theta pairing of two $R$-modules $M$ and $N$ over a hyper-surface ring
$R/(f)$ is 

\vspace{0.5cm}

\begin{center}
$\Theta(M,N):= l(Tor_{2k}^R(M,N))-l(Tor_{2k+1}^R(M,N)), \qquad k>>0$
\end{center}

\vspace{0.5cm}

\noindent
The definition makes sense as soon as the lengths appearing are finite. This certainly happens if $R$ has an isolated singular point. 
\end{definition}

\vspace{0.5cm}

\begin{example} \cite{BVS} Take $f=xy-z^2, M=\mathbb{C}[[xyz]]/(x,y)$. A matrix factorization $(A,B)$ associated to $M$ is given by \\
\[ A=\left( \begin{array}{cc}
y  & -z \\
-z &  x 
\end{array} \right), \qquad
 B=\left( \begin{array}{cc}
x &  z \\
z &  y 
\end{array} \right).\]

\vspace{0.5cm}

\noindent
The $Tor_k^R(M,M)$ is the homology of the complex

\vspace{0.5cm}

\begin{center}
$.....\to \mathbb{C}[[y]]^2 \to \mathbb{C}[[y]]^2 \to \mathbb{C}[[y]] \to 0$
\end{center}

\vspace{0.5cm}

\noindent
where\\
\[ \alpha=\left( \begin{array}{cc}
y & 0 \\
0 & 0
\end{array} \right), \qquad
 \beta=\left( \begin{array}{cc}
0 & 0 \\
0 & y 
\end{array} \right).\]\\ 

\vspace{0.3cm}

So we find that $\Theta(M,M)=0$.
\end{example}

\vspace{0.5cm}

Hochster theta pairing is additive on short exact sequences in each argument, and thus determines a $\mathbb{Z}$-valued pairing on $G(R)$, the Grothendieck group of finitely generated $R$-modules. One loses no information by tensoring with $\mathbb{Q}$ and so often theta is interpreted as a symmetric bilinear form on the rational vector space $G(R)_{\mathbb{Q}}$. It is basic that Theta would vanish if either $M$ or $N$ be Artinian or have finite projective dimension \cite{MPSW}, \cite{BVS}. The $\Theta$-invariant has different interpretations as intersection multiplicity in the singular category. 

\vspace{0.5cm}
 
\begin{theorem} \cite{BVS} When $M=\mathcal{O}_Y=R/I,N=\mathcal{O}_Z=R/J$, where $Y,Z \subseteq X_0$ are the sub-varieties defined by the ideals $I,J$ respectively, then

\vspace{0.5cm}
 
\begin{center}
$\Theta(\mathcal{O}_Y,\mathcal{O}_Z)=i(0;Y,Z)$ 
\end{center}

\vspace{0.5cm}

\noindent
in case that $Y \cap Z={0}$. Here $i(0; , )$ is the ordinary intersection multiplicity in $\mathbb{C}^{n+1}$.
\end{theorem}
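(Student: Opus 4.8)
The plan is to identify both sides with Euler characteristics of $\operatorname{Tor}$ modules and then to bridge the (infinite, eventually $2$-periodic) resolution over the hypersurface ring $R$ with the (finite) resolution over the regular ring $P$. First I would unwind the right-hand side. Since $Y,Z \subseteq X_0 = V(f)$, the ideals $I,J \subseteq R$ lift to ideals $\tilde I,\tilde J \subseteq P$ containing $f$, so that $\mathcal{O}_Y = R/I = P/\tilde I$ and $\mathcal{O}_Z = R/J = P/\tilde J$ already as $P$-modules. Because $P$ is regular local of dimension $n+1$ and $Y\cap Z=\{0\}$, each $\operatorname{Tor}_i^P(\mathcal{O}_Y,\mathcal{O}_Z)$ is supported at the origin, hence of finite length, and vanishes for $i>n+1$. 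Thus the classical intersection multiplicity is Serre's $\operatorname{Tor}$-formula
\[ i(0;Y,Z) = \chi^P(\mathcal{O}_Y,\mathcal{O}_Z) := \sum_{i=0}^{n+1} (-1)^i\, \ell\bigl(\operatorname{Tor}_i^P(\mathcal{O}_Y,\mathcal{O}_Z)\bigr), \]
a finite integer, and the task becomes showing $\Theta(\mathcal{O}_Y,\mathcal{O}_Z)=\chi^P(\mathcal{O}_Y,\mathcal{O}_Z)$.

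The bridge I would use is the change-of-rings long exact sequence for $R=P/(f)$. Resolving $R$ over $P$ by the Koszul complex $0\to P\xrightarrow{f}P\to R\to 0$ and feeding the $R$-modules $M=\mathcal{O}_Y$, $N=\mathcal{O}_Z$ (both annihilated by $f$) into the Cartan--Eilenberg spectral sequence $\operatorname{Tor}_p^R(M,\operatorname{Tor}_q^P(R,N)) \Rightarrow \operatorname{Tor}_{p+q}^P(M,N)$, one computes $\operatorname{Tor}_q^P(R,N)=N$ for $q=0,1$ and $0$ otherwise. Hence the spectral sequence is concentrated in the two rows $q=0,1$ and collapses to the long exact sequence
\[ \cdots \to \operatorname{Tor}_{m+1}^R(M,N) \xrightarrow{\ d_2\ } \operatorname{Tor}_{m-1}^R(M,N) \to \operatorname{Tor}_m^P(M,N) \to \operatorname{Tor}_m^R(M,N) \xrightarrow{\ d_2\ } \operatorname{Tor}_{m-2}^R(M,N) \to \cdots \]
in which $d_2$ is the degree $-2$ Eisenbud operator. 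For $m>n+1$ the flanking $P$-terms vanish and $d_2$ becomes an isomorphism, which re-proves that $\operatorname{Tor}_\bullet^R(M,N)$ is $2$-periodic in high degree and, together with the isolated-singularity hypothesis, that $\Theta$ is well defined; by periodicity $\Theta(M,N)=\ell(\operatorname{Tor}_{2k}^R)-\ell(\operatorname{Tor}_{2k-1}^R)$ for $k\gg 0$.

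Finally I would run the length bookkeeping. Writing $t_m=\ell(\operatorname{Tor}_m^R(M,N))$, $s_m=\ell(\operatorname{Tor}_m^P(M,N))$, and letting $k_m,c_m$ be the lengths of kernel and cokernel of $d_2\colon \operatorname{Tor}_m^R\to\operatorname{Tor}_{m-2}^R$, the exact sequence yields the two identities $t_m-t_{m-2}=k_m-c_m$ and $s_m=c_{m+1}+k_m$. Summing $(-1)^m s_m$ and using $s_m=0$ for $m>n+1$, $c_0=0$ (its target $\operatorname{Tor}_{-2}^R$ vanishes), and $c_m=k_m=0$ for $m>n+1$ (where $d_2$ is an isomorphism), the contributions of the $c_m$ cancel in consecutive pairs and the sum telescopes, at any truncation index $2K>n+1$, to $\chi^P(M,N)=t_{2K}-t_{2K-1}=\Theta(M,N)$. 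The step I expect to be the main obstacle is precisely this bookkeeping: the $R$-resolution is infinite, so the alternating sum of lengths does not converge term by term, and one must truncate at a large even degree and verify that every boundary term vanishes before the telescoping collapses — pinning down the parities and the index shifts in $d_2$ is exactly where the argument can go wrong.
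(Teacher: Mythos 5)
The paper states this result only as a citation to [BVS] and gives no proof of its own, so there is nothing internal to compare against; your argument is correct and is essentially the standard one (going back to Hochster, and used in Buchweitz--Van Straten): identify $i(0;Y,Z)$ with Serre's Euler characteristic $\chi^P$ over the regular ring $P$, pass to $R=P/(f)$ via the two-row base-change spectral sequence whose collapse gives the long exact sequence with the degree $-2$ Eisenbud operator, and telescope the alternating sum of lengths. Your bookkeeping checks out: $c_0=0$, $k_m=c_m=0$ once both flanking $\operatorname{Tor}^P$ terms vanish (i.e.\ for $m\ge n+3$, so any truncation $2K>n+1$ works), and the telescoped sum $t_{2K}-t_{2K-1}$ equals $\Theta(M,N)$ by $2$-periodicity. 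The only cosmetic remark is that the finiteness of the lengths here comes from $Y\cap Z=\{0\}$ alone and does not need the isolated-singularity hypothesis you invoke in passing.
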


\vspace{0.5cm}

\noindent
By additivity over short exact sequences and the fact that any module admits a finite filtration with sub-quotients of the form $R/I$, knowing $\Theta(\mathcal{O}_Y,\mathcal{O}_Z)$ determines
$\Theta(M,N)$ for all modules $M,N$.

\vspace{0.5cm}

\begin{theorem} \cite{BVS} Assume $f \in \mathbb{C}[[x_1,...,x_{2m+2}]$ is a homogeneous polynomial of degree $d$, and $X_0=f^{-1}(0) \in \mathbb{C}^{2m+2}$ and $T=V(f) \in \mathbb{P}^{2m+1}$ the associated projective cone of degree $d$. Let $Y$ and $Z$ be also co-dimension $m$ cycles in $T$. If $Y,Z$ intersect transversely, then

\vspace{0.5cm}
 
\begin{center}
$\Theta(\mathcal{O}_Y,\mathcal{O}_Z)=-\frac{1}{d}[[Y]].[[Z]]$
\end{center}

\vspace{0.5cm}

\noindent
where $[[Y]]:=d[Y]-deg(Y).h^m$ is the primitive class of $[Y]$, with $h \in H^1(T)$ the hyperplane class.
\end{theorem}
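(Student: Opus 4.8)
The plan is to pass from the local algebra of Tor-modules to the global intersection theory on the smooth projective hypersurface $T$, exploiting that $\Theta$ is additive, factors through $G_0(R)_{\mathbb{Q}}$, and annihilates modules of finite length or finite projective dimension. First I would record the reduction: since $\Theta(\mathcal{O}_Y,\mathcal{O}_Z)$ depends only on the classes $[\mathcal{O}_Y],[\mathcal{O}_Z]$ in the reduced Grothendieck group, and since $\Theta$ is compatible with the chern character and cycle class map \cite{MPSW}, it suffices to control the images of these classes under the cycle class map into the middle cohomology $H^{2m}(T)$. A direct computation with the hyperplane class shows that the primitive projection of $[Y]$ is $\tfrac1d[[Y]]=[Y]-\tfrac{\deg(Y)}{d}h^m$, using $h^{2m}=d$ on $T$; the multiple of $h^m$ is carried by classes on which $\Theta$ vanishes, so only the primitive parts survive the pairing.

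Next I would establish the central identity, namely that under this identification $\Theta$ is a universal scalar multiple of the intersection form on primitive cohomology, $\Theta(\mathcal{O}_Y,\mathcal{O}_Z)=c\cdot\bigl([[Y]]\cdot[[Z]]\bigr)$ for a constant $c$ independent of $Y,Z$. The mechanism is the $2$-periodicity of $\operatorname{Tor}^R_\bullet(\mathcal{O}_Y,\mathcal{O}_Z)$, which comes from the matrix factorization of $f$ attached to the maximal Cohen-Macaulay approximations of $\mathcal{O}_Y$ and $\mathcal{O}_Z$. The alternating length defining $\Theta$ is precisely the Euler characteristic of the stabilized (two-periodic) complex, and this stable Euler pairing is what the cycle class map should transport to the cup product on $H^{2m}_{\mathrm{prim}}(T)$. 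Concretely I would route this through the Hochschild/cyclic homology of $MF(R,f)$, which for a homogeneous isolated singularity is the Milnor (Jacobian) algebra equipped with the residue pairing, and through Griffiths' identification of the graded pieces of the Milnor algebra with the Hodge pieces of $H^{2m}_{\mathrm{prim}}(T)$; the residue pairing then matches the polarization, i.e. the intersection form, up to a scalar.

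To pin the scalar to $-\tfrac1d$ I would track the normalizations on both sides rather than grind through a single example. The factor $\tfrac1d$ is forced by the normalization $h^{2m}=d$ already visible in the primitive projection $\tfrac1d[[Y]]$, and the sign is dictated by the Hodge-Riemann bilinear relations, which fix the sign of the intersection form on the primitive $(m,m)$-part. Matching these two normalizations determines $c=-\tfrac1d$ once and for all, after which the general formula follows by bilinearity and the additivity of $\Theta$. One may sanity-check $c$ against the previously known semidefiniteness of $\Theta$ in the homogeneous case, which is exactly the statement that $-\tfrac1d$ times the primitive intersection form has a definite sign on each Hodge summand.

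The main obstacle I anticipate is the central identity of the second step: proving that the alternating sum of Tor-lengths is transported to the cup product on primitive cohomology \emph{with the correct normalization}. This requires controlling the passage from the $2$-periodic local data (matrix factorizations, residue/duality pairing on the Milnor algebra) to the global polarized Hodge structure on $T$, and in particular checking that the cycle class map intertwines the Euler pairing on $K_0(MF(R,f))$ with the intersection form, and not merely up to a torsion ambiguity or an unspecified constant. Verifying this intertwining, together with the exact value of $c$ and its sign, is the technical heart of the argument; everything else is formal once this comparison is in hand.
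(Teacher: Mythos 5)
The first thing to note is that the paper does not prove this theorem at all: it is quoted from Buchweitz--Van Straten \cite{BVS}, and the only argument supplied in the text is the elementary computation identifying the primitive class, namely that $h^m.h^m=d$ and $[Y].h^m=\deg(Y)$ give $[[Y]]=d[Y]-\deg(Y)h^m$ and hence the expansion $-\frac{1}{d}[[Y]].[[Z]]=-d[Y].[Z]+\deg(Y)\deg(Z)$. Your opening reduction reproduces exactly this bookkeeping, and your overall route (matrix factorizations, two-periodicity of $\operatorname{Tor}$, Hochschild homology of $MF(R,f)$ carrying the residue pairing, the identification of the Jacobian ring with primitive cohomology) is consistent with the machinery the paper assembles in Sections 3 and 4 and with the way \cite{BVS} and \cite{PV} actually operate.

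However, as a proof your proposal has a genuine gap, and you have located it yourself: the ``central identity'' that the cycle class map intertwines the stable Euler pairing on $K_0(MF(R,f))$ with the cup product on $H^{2m}_{\mathrm{prim}}(T)$, with the specific constant $-\frac{1}{d}$, is exactly the content of the theorem, and it is the one step you do not carry out. The two devices you offer to pin down the constant do not suffice. First, proportionality of $\Theta$ to the primitive intersection form is not automatic: there is no a priori reason that two bilinear forms on $H^{2m}_{\mathrm{prim}}(T)$ related by a correspondence should differ by a universal scalar unless one first proves the intertwining, which is the point at issue. Second, appealing to the known semidefiniteness of $\Theta$ in the homogeneous case to fix the sign is circular, since in \cite{MPSW} that semidefiniteness is itself deduced from this formula (via the pairing $(a,b)\mapsto(\int_X a\cup\gamma^{(n-1)/2})(\int_X b\cup\gamma^{(n-1)/2})-d\int_X a\cup b$ and the Hodge--Riemann relations). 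What is actually needed, and what \cite{BVS} supplies, is a direct computation relating $\Theta$ to the linking/Seifert form of the singularity and then, in the homogeneous case, to the intersection form on the projective hypersurface; without that computation the constant and its sign remain undetermined.
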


\vspace{0.5cm}

\noindent
The primitive class of a cycle $Y$ is the projection of its fundamental class $[Y] \in H^m(T)$ into the orthogonal complement to $h^m$ with respect to the intersection pairing into $H^{2m}(T)=\mathbb{C}$. As $h^m. h^m=d=deg(T)$ and $[Y].h^m=deg(Y)$ the description of the primitive class follows. Substituting the claim can be reformulated

\vspace{0.5cm}

\begin{center}
$\Theta(\mathcal{O}_Y,\mathcal{O}_Z)=-\frac{1}{d}[[Y]].[[Z]]=-d[Y].[Z]+deg(Y)deg(Z)$
\end{center}

\vspace{0.5cm}

\noindent
Where $[Y].[Z]$ denotes the intersection form on the cohomology of the projective space, \cite{BVS}.

\vspace{0.5cm} 

When $f$ in consideration is a homogeneous polynomial of degree $d$, such that $X:=Proj(R)$ is a smooth $k$-variety, the Theta pairing is induced, via chern character map, from the pairing on the primitive part of de Rham cohomology

\vspace{0.5cm}

\begin{center}
$\displaystyle{\frac{H^{(n-1)/2}(X,\mathbb{C})}{\mathbb{C}. \gamma^{(n-1)/2}} \times \frac{H^{(n-1)/2}(X,\mathbb{C})}{\mathbb{C}. \gamma^{(n-1)/2}} \to \mathbb{C}}$
\end{center}

\vspace{0.5cm}

\noindent
given by 

\vspace{0.5cm}

\begin{center}
$(a,b) \to (\int_{X} a \cup \gamma^{(n-1)/2})(\int_{X} a \cup \gamma^{(n-1)/2})-d(\int_{X} a \cup b)$
\end{center}

\vspace{0.5cm}

\noindent
where $\gamma$ is the class of a hyperplane section and Theta would vanish for rings of this type having even dimensions. When $n=1$ by $\gamma^0$ we mean $1 \in H^0(X,\mathbb{C})$, \cite{MPSW}.

\vspace{0.5cm}

\begin{theorem} \cite{MPSW} 
For $R$ and $X$ as above, if $n$ is odd there is a commutative diagram

\begin{equation}
\begin{CD}
G(R)_{\mathbb{Q}}^{\otimes 2}  @<<<  \displaystyle{(\frac{K(X)_{\mathbb{Q}}}{\alpha})^{\otimes 2}}\\
@V{\Theta}VV        @VV{(ch^{n-1/2})^{\otimes 2}}V\\
\mathbb{C}  @<<{\theta}<  ( \displaystyle{\frac{H^{(n-1)/2}(X,\mathbb{C})}{\mathbb{C}. \gamma^{(n-1)/2}}} )^{\otimes 2}
\end{CD} 
\end{equation}

\vspace{0.5cm}

\end{theorem}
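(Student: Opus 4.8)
The plan is to realize the Hochster pairing $\Theta$ as an Euler form on a triangulated category attached to the singularity, to transport this form to the bounded derived category of $X$ via Orlov's comparison for graded isolated hypersurfaces, and finally to convert the resulting $K$-theoretic Euler pairing into a cohomological intersection by Hirzebruch--Riemann--Roch, after which the Lefschetz decomposition isolates exactly the middle primitive pairing $\theta$. The homogeneity of $f$ is used throughout: it guarantees that $X=\mathrm{Proj}(R)$ is smooth of dimension $n-1$, that the Serre correspondence between graded $R$-modules and coherent sheaves on $X$ is available, and that the relevant nonzero cohomology sits in the single middle degree, so that for $n$ odd the primitive part is $H^{(n-1)/2}(X,\mathbb{C})/\mathbb{C}\cdot\gamma^{(n-1)/2}$, the target of $\mathrm{ch}^{(n-1)/2}$.

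First I would make precise the top horizontal arrow $K(X)_{\mathbb{Q}}/\alpha \to G(R)_{\mathbb{Q}}$. By Serre's correspondence every coherent sheaf on $X$ is the sheafification $\widetilde{M}$ of a finitely generated graded $R$-module $M$, determined up to modules of finite length; since $\Theta$ annihilates finite-length modules and modules of finite projective dimension (as recalled above), passing to $K$-groups gives a well-defined homomorphism on the quotient $K(X)_{\mathbb{Q}}/\alpha$, where $\alpha$ is spanned by the classes of the twists $\mathcal{O}_X(i)$, equivalently by the powers of the hyperplane class $\gamma$ under $\mathrm{ch}$. I would then identify $\Theta(M,N)$ with the Euler form of the stable category of maximal Cohen--Macaulay modules: the $2$-periodicity of $\mathrm{Tor}^R_\bullet(M,N)$ explained above makes $l(\mathrm{Tor}_{2k})-l(\mathrm{Tor}_{2k+1})$ stabilize, for $k\gg 0$, to $\pm\sum_i(-1)^i\dim\mathrm{Ext}^i_{\underline{\mathrm{MCM}}}(M,N)$, the Euler pairing on $D_{sg}(R)\simeq\underline{\mathrm{MCM}}(R)\simeq MF(R,f)$.

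With $\Theta$ rewritten as an Euler form, I would transport it to $D^b(\mathrm{coh}\,X)$ through Orlov's comparison between the graded singularity category of $R$ and the derived category of $X$, which respects Euler pairings, and then evaluate by Hirzebruch--Riemann--Roch on the smooth projective $X$, $\chi(E,F)=\int_X\mathrm{ch}(E)^{\vee}\,\mathrm{ch}(F)\,\mathrm{td}(T_X)$. Since $\mathrm{ch}\otimes\mathbb{Q}$ is an isomorphism this realises the pairing on $H^{*}(X,\mathbb{C})$, and commutativity of the square reduces to the naturality of the Chern character together with the compatibility of the sheaf--module comparison with $\mathrm{ch}$. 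To isolate the middle component I would use the Lefschetz decomposition: two classes in the middle degree already cup to the top degree, so only the degree-zero part of $\mathrm{td}(T_X)$ survives, and every remaining Chern and Todd correction involves the ambient powers $\gamma^{j}$, which are exactly the classes removed on $K(X)_{\mathbb{Q}}/\alpha$. What remains is the cup-product intersection form restricted to the primitive part; writing that restriction through the orthogonal projection that removes the $\gamma^{(n-1)/2}$-component, and using $\int_X\gamma^{n-1}=d$, reproduces $\theta(a,b)=\big(\int_X a\cup\gamma^{(n-1)/2}\big)\big(\int_X b\cup\gamma^{(n-1)/2}\big)-d\int_X a\cup b$ as exactly $-d$ times the primitive intersection form.

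The main obstacle I anticipate is not conceptual but the pinning down of the global sign and the factor $-d$: one must track the sign relating $\Theta$ to the stable Euler form through the $2$-periodicity, the shift conventions in Orlov's equivalence, and the duality $\mathrm{ch}(E)^{\vee}$ in Hirzebruch--Riemann--Roch, and then verify that after restriction to $K(X)_{\mathbb{Q}}/\alpha$ the only surviving contribution is precisely $-d$ times the primitive form with no residual ambient term. This is where the explicit description of $\alpha$ as the span of the twists, the orthogonality in the Lefschetz decomposition, and the degree identity $\int_X\gamma^{n-1}=d$ must be used together. Once these constants are fixed, the commutativity of the diagram is the formal statement that both composites carry a pair of $K$-classes to $\theta\big(\mathrm{ch}^{(n-1)/2}(-),\mathrm{ch}^{(n-1)/2}(-)\big)$, which follows from the functorialities already assembled.
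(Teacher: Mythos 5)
First, a caveat: the paper itself offers no proof of this theorem --- it is imported from \cite{MPSW} as a quoted result --- so there is no internal argument to measure yours against. On its own terms, your outline has the right overall architecture (the proofs in \cite{MPSW} and \cite{BVS} do interpret $\Theta$ as a two-periodic Euler characteristic, pass to $X=\mathrm{Proj}(R)$, and finish with Riemann--Roch and the Lefschetz decomposition), but there is a genuine gap at the one step that carries the content of the theorem.

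The gap is the passage from the \emph{ungraded, local} invariant $\Theta$ to a \emph{graded} Euler form on $D^b(\mathrm{coh}\,X)$. For graded $M,N$ the length $l(\mathrm{Tor}_k^R(M,N))$ is the sum over all internal degrees $j$ of $\dim_{\mathbb C}\mathrm{Tor}_k^R(M,N)_j$, and under the Serre/Orlov correspondence each internal degree contributes an Euler pairing of $\widetilde M$ against a \emph{twist} $\widetilde N(j)$. So $\Theta(M,N)$ is not one Euler form transported across Orlov's comparison but a sum of them over all twists, and evaluating that sum (via Hilbert series at roots of unity in \cite{MPSW}, or via the cone construction in \cite{BVS}) is exactly what produces the factor $-d$ and, crucially, the rank-one summand $\bigl(\int_X a\cup\gamma^{(n-1)/2}\bigr)\bigl(\int_X b\cup\gamma^{(n-1)/2}\bigr)$ of $\theta$. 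That summand is a product of two linear functionals; it cannot come out of a single Hirzebruch--Riemann--Roch evaluation $\int_X\mathrm{ch}(E)^{\vee}\mathrm{ch}(F)\,\mathrm{td}(T_X)$, and it is indispensable both for $\theta$ to descend to the quotient by $\mathbb{C}\cdot\gamma^{(n-1)/2}$ and for the positivity statement that follows. Your proposed mechanism for the corrections --- that the Todd and non-middle Chern terms ``involve powers of $\gamma$ and are removed on $K(X)_{\mathbb Q}/\alpha$'' --- does not work as stated: the Todd class is a fixed cohomology class in the integrand, not a $K$-class that the quotient by $\alpha$ can delete, and while $ch_j(E)\in\mathbb{Q}\gamma^j$ for $j\neq (n-1)/2$ by weak Lefschetz, the resulting terms do not simply vanish on the quotient; they must be shown to assemble into precisely the first summand of $\theta$. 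Two smaller points: Orlov's comparison is an equivalence only when $d=n+1$, and otherwise is a fully faithful embedding with exceptional complement whose direction depends on the sign of $d-(n+1)$, so ``respects Euler pairings'' needs a case analysis; and the stable Ext groups are themselves two-periodic, so $\sum_i(-1)^i\dim\mathrm{Ext}^i_{\underline{\mathrm{MCM}}}$ should be the two-term Euler characteristic of the $\mathbb{Z}/2$-graded Hom complex. Until the sum over twists is actually carried out, the specific shape of $\theta$ --- hence the commutativity of the square --- is asserted rather than proved.
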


\vspace{0.5cm}

\begin{theorem}  \cite{MPSW} For $R$ and $X$ as above and $n$ odd the restriction of the pairing $(-1)^{(n+1)/2} \Theta $ to

\vspace{0.5cm}
 
\begin{center}
$im(ch^{\frac{n-1}{2}}): K(X)_{\mathbb{Q}}/{\alpha} \to \displaystyle{\frac{H^{(n-1)/2}(X,\mathbb{C})}{\mathbb{C}. \gamma^{\frac{n-1}{2}}}}$ 
\end{center}

\vspace{0.5cm}

\noindent
is positive definite. i.e. $(-1)^{(n+1)/2}\Theta(v,v) \geq 0$
with equality holding if and only if $v=0$. In this way $\theta$ is semi-definite on $G(R)$.
\end{theorem}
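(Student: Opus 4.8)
The plan is to reduce the statement to the Hodge--Riemann bilinear relations on the middle primitive cohomology of $X$, using the comparison already recorded in the preceding commutative diagram. Since $\Theta$ on $G(R)_{\mathbb{Q}}$ is the pullback of the explicit cohomological form $\theta$ along the Chern character $ch^{(n-1)/2}$, it suffices to prove that $(-1)^{(n+1)/2}\theta$ is positive definite on $im(ch^{(n-1)/2})$ inside $H^{(n-1)/2}(X,\mathbb{C})/\mathbb{C}\cdot\gamma^{(n-1)/2}$; semidefiniteness of $\Theta$ on $G(R)$ then follows formally, the radical being exactly the kernel of the Chern character. Write $s=(n-1)/2$, so that $X$ has even complex dimension $2s$ and $H^{2s}(X,\mathbb{C})$ is its middle cohomology; the quotient by $\mathbb{C}\cdot\gamma^{s}$ is identified, via hard Lefschetz and the primitive decomposition, with the primitive part $P^{2s}(X)$ as a Hodge structure.

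First I would check that $\theta$ descends to the quotient and computes the polarization there. A direct calculation shows that $\gamma^{s}$ lies in the radical of $\theta$, so the form is well defined on $H^{2s}/\mathbb{C}\cdot\gamma^{s}$, and on a primitive representative $a$ (for which $\int_X a\cup\gamma^{s}=0$) the first term of $\theta$ drops out, leaving $\theta(a,b)=-d\int_X a\cup b$. Thus on $P^{2s}(X)$ the form $\theta$ is, up to the factor $-d$, the intersection form underlying the Hodge--Riemann polarization.

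Next I would invoke that the image of the Chern character consists of \emph{algebraic}, hence rational Hodge, classes: each $ch^{s}(v)$ projects to a real class of pure type $(s,s)$ in $P^{2s}(X)$. For such a class $a\neq 0$ the diagonal Hodge--Riemann bilinear relation reads $i^{\,s-s}(-1)^{s(2s-1)}\int_X a\cup\bar a>0$, that is $(-1)^{s}\int_X a\cup a>0$ since $a$ is real and $2s-1$ is odd. Feeding this into $\theta(a,a)=-d\int_X a\cup a$ and tracking signs, $(-1)^{(n+1)/2}\theta(a,a)$ carries the total sign $(-1)^{(n+1)/2+1+s}=(-1)^{n+1}=+1$, because $n$ is odd and $(n+1)/2+1+s=n+1$; hence $(-1)^{(n+1)/2}\theta(a,a)=d\cdot(\text{positive})>0$, with equality only for $a=0$. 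This is the asserted positive definiteness on $im(ch^{s})$.

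Finally, pulling back along $ch^{(n-1)/2}$ converts positive definiteness on the image into positive semidefiniteness of $(-1)^{(n+1)/2}\Theta$ on $G(R)_{\mathbb{Q}}$, with $\Theta(v,v)=0$ precisely when $ch^{(n-1)/2}(v)=0$. The main obstacle is the bookkeeping that makes the Hodge-theoretic input applicable: one must verify that the Chern character image is genuinely primitive of pure type $(s,s)$ (so that exactly the diagonal relation, with its favorable sign $i^{0}=1$, is the one in force), that $\theta$ is indeed the polarizing form up to scalar on the quotient, and that the competing sign conventions $(-1)^{(n+1)/2}$, the $(-1)^{s(2s-1)}$ of Hodge--Riemann, and the $-d$ all combine to $+1$. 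Everything else is the standard package of hard Lefschetz and the Hodge--Riemann relations for the polarized Hodge structure $H^{2s}(X,\mathbb{C})$.
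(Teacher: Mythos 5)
Your proposal is correct and follows essentially the same route as the paper's proof: identify the quotient $H^{n-1}(X,\mathbb{C})/\mathbb{C}\cdot\gamma^{(n-1)/2}$ with the primitive part via the splitting $a\mapsto a-\frac{1}{d}(\int_X a\cup\gamma^{(n-1)/2})\gamma^{(n-1)/2}$, observe that the Chern character image consists of rational $(\frac{n-1}{2},\frac{n-1}{2})$ classes, note that $\theta$ becomes $-d$ times the cup-product form on primitive representatives, and conclude by the Hodge--Riemann bilinear relations. The only difference is that you carry out the sign bookkeeping explicitly, which the paper leaves implicit in the phrase ``polarization properties of cup product.''
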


\begin{proof} \cite{MPSW} Define 

\vspace{0.5cm}

\begin{center}
$W=H^{n-1}(X(\mathbb{C}),\mathbb{Q}) \cap H^{\frac{n-1}{2},\frac{n-1}{2}}(X(\mathbb{C}))$.
\end{center}

\vspace{0.4cm}

\noindent
It is classical that the image of $ch^{(n-1)/2}$ is contained in $W$. Define $e:W/\mathbb{Q}. \gamma^{(n-1)/2} \hookrightarrow H^{n-1}(X,\mathbb{Q})$ by

\vspace{0.3cm}

\begin{center}
$e(a)=a-\displaystyle{\frac{\int_X a \cup \gamma^{(n-1)/2}}{d}. \gamma^{(n-1)/2}} \in W$.
\end{center}

\vspace{0.3cm}

\noindent
We know that $\theta(a,b)=-d.I^{coh}(e(a),e(b))$ Now the theorem follows from the polarization properties of cup product on cohomology of projective varieties.
\end{proof}

\vspace{0.5cm}

\section{Hodge theory and Residue form}

\vspace{0.5cm}

Asuume $f:\mathbb{C}^{n+1} \to \mathbb{C}$ is a germ of isolated singularity. We choose a representative $f:X \to T$ over a small disc $T$ according to the milnor fibration theorem. It is possible to embed the Milnor fibration $f:X \to T$ into a compactified (projective) fibration $f_Y : Y \to T$ such that the fiber $Y_t$ sits in $\mathbb{P}^{n+1}$ for $t \neq 0$. The projective fibration $f_Y$ has a  unique singularity at $0 \in Y_0$ over $t = 0$. Then, there exists a short exact sequence
 
\begin{equation}
0 \to H^n(Y_0,\mathbb{Q}) \to H^n(Y_t,\mathbb{Q}) \stackrel{i^*}{\rightarrow} H^n(X_t,\mathbb{Q}) \to 0 , \qquad t \neq 0.
\end{equation}

\vspace{0.5cm}

We have $H^n(Y_0,\mathbb{Q})=\ker(M_Y -id)$, by the invariant cycle theorem, where $M_Y$ is the monodromy of $f_Y$. The form $S_Y :=(-1)^{n(n-1)/2} I_Y^{coh} : H^n(Y_t,\mathbb{Q}) \times H^n(Y_t,\mathbb{Q}) \to \mathbb{Q}$ is the polarization form of pure Hodge structure on $H^n(Y_t,\mathbb{C})$, $t \in T^{\prime}$ (the punctured disc). W. Schmid (resp. J. Steenbrink) has defined a canonical MHS on $H^n(Y_t,\mathbb{Q})$ (resp. on $H^n(Y_t,\mathbb{Q})$) namely limit MHS, which make the above sequence an exact sequence of MHS's.

\vspace{0.5cm}

\noindent
In the short exact sequence, the map $i^*$ is an isomorphism on $H^n(Y_t,\mathbb{Q})_{\neq 1} \to H^n(X_t,\mathbb{Q})_{\neq 1}$ giving $S=(-1)^{n(n-1)/2} I^{coh} =(-1)^{n(n-1)/2} I_Y^{coh}= S_Y$ on  $H^n(X_t,\mathbb{Q})_{\neq 1}$.\\[0.3cm]
The above short exact sequence restricts to the following one, 

\begin{equation}
0 \to \ker \{N_Y:H^n(Y_t,\mathbb{Q})_{1} \to H^n(Y_t,\mathbb{Q})_{1}\} \to H^n(Y_t,\mathbb{Q})_{1} \to H^n(X_t,\mathbb{Q})_{1} \to 0
\end{equation}

\vspace{0.5cm}

\noindent
So $a,b \in H^n(X_t,\mathbb{Q})_{1}$ have pre-images $a_Y , b_Y \in H^n(Y_t,\mathbb{Q})_{1}$
and 
\begin{equation}
S(a,b)=S_Y(a_Y,(-N_Y)b_Y) 
\end{equation}

\vspace{0.2cm}

\noindent
is independent of the lifts of $a_Y , b_Y$, by the fact that $N_Y$ is an infinitesimal isometry for $S_Y$. The equation (10) defines the desired polarization on $H^n(X_t,\mathbb{Q})_1$. The polarization form $S$ is $M$-invariant, non-degenerate, $(-1)^n$-symmetric on $H^n(X_t,\mathbb{Q})_{\neq 1}$ and $(-1)^{n+1}$-symmetric on $H^n(X_t,\mathbb{Q})_{1}$, \cite{H1}.

\vspace{0.5cm}

\noindent
Suppose,
\begin{center}
$H^n(X_{\infty}, \mathbb{C})= \displaystyle{\bigoplus_{p,q,\lambda}} I^{p,q}_{\lambda}$ 
\end{center}

\vspace{0.5cm}

\noindent
is the Deligne-Hodge bigrading, and generalized eigen-spaces of vanishing cohomology, and also $\lambda=\exp(-2\pi i \alpha)$ with $\alpha \in (-1,0]$. Consider the isomorphism obtained by composing the three maps,

\begin{equation}
\Phi_{\lambda}^{p,q}: I^{p,q}_{\lambda} \stackrel{\hat{\Phi}_{\lambda}}{\longrightarrow}Gr_V^{\alpha+n-p}H'' \stackrel{pr}{\longrightarrow} Gr_V^{\bullet} H^{\prime \prime}/\partial_t^{-1}H^{\prime \prime} \stackrel{\cong}{\longrightarrow} \Omega_f
\end{equation}

\vspace{0.5cm}

\noindent
where 

\vspace{0.5cm}

\begin{center}
$\hat{\Phi}_{\lambda}^{p,q}:= \partial_t^{p-n} \circ \psi_{\alpha}|  I^{p,q}_{\lambda}$ \\[0.2cm] 
$\Phi=\bigoplus_{p,q,\lambda} \Phi_{\lambda}^{p,q}, \qquad \Phi_{\lambda}^{p,q}=pr \circ \hat{\Phi}_{\lambda}^{p,q}$

\end{center}

\vspace{0.5cm}

\noindent
where $I^{p,q}$ stands for the bigrading, $\partial_t$ is the Gauss-Manin connection and $\psi_{\alpha}$ is the nearby map cf. \cite{R}. The map $\Phi$ is obviously an isomorphism because both of the $\psi_{\alpha}$ and $\partial_t^{-1}$ are isomorphisms, cf. \cite{H1}, \cite{V}.

\vspace{0.5cm}

\begin{definition}(MHS on $\Omega_f$)
We define a mixed Hodge structure on $\Omega_f$ using the isomorphism $\Phi$. This means that all the data of the Steenbrink MHS on $H^n(X_{\infty},\mathbb{C})$ such as the $\mathbb{Q}$ or $\mathbb{R}$-structure, the weight filtration $W_{\bullet}\Omega_{f,\mathbb{Q}}$ and the Hodge filtration $F^{\bullet}\Omega_{f,\mathbb{C}}$ is defined via the isomorphism $\Phi$. Specifically; in this we obtain a conjugation map

\begin{equation}
\bar{.}:\Omega_{f,\mathbb{Q}} \otimes \mathbb{C} \to \Omega_{f,\mathbb{Q}} \otimes \mathbb{C}, \qquad \Omega_{f,\mathbb{Q}}:=\Phi^{-1}H^n(X_{\infty},\mathbb{Q})
\end{equation}

\vspace{0.5cm}

\noindent
defined from the conjugation on $H^n(X_{\infty},\mathbb{C})$ via this isomorphism. 

\end{definition}

\vspace{0.5cm}

\noindent
Recall that the limit (Steenbrink) mixed Hodge structure, 
is defined by

\[ F^p H^n(X_{\infty},\mathbb{C})_{\lambda}=\psi_{\alpha}^{-1}(Gr_V^{\alpha} \partial_t^{n-p} H^{\prime \prime}) \] 

\vspace{0.5cm}

\noindent
This justifies the power of $\partial_t^{-1}$ applied in the definition of $\Phi$. 

\vspace{0.5cm}

\begin{theorem}  \cite{R} 
Assume $f:(\mathbb{C}^{n+1},0) \to (\mathbb{C},0)$, is a holomorphic germ with isolated singularity at $0$. Then, the isomorphism $\Phi$ makes the following diagram commutative up to a complex constant;

\begin{equation}
\begin{CD}
\widehat{Res}_{f,0}:\Omega_f \times \Omega_f @>>> \mathbb{C}\\
@VV(\Phi^{-1},\Phi^{-1})V                   @VV \times *V \\
S:H^n(X_{\infty}) \times H^n(X_{\infty}) @>>> \mathbb{C}
\end{CD} \qquad \qquad  * \ne 0
\end{equation}

\vspace{0.5cm}

\noindent
where, 

\[ \widehat{Res}_{f,0}=\text{res}_{f,0}\ (\bullet,\tilde{C}\ \bullet) \]

\vspace{0.5cm}

\noindent
and $\tilde{C}$ is defined relative to the Deligne decomposition of $\Omega_f$, via the isomorphism $\Phi$. If $J^{p,q}=\Phi^{-1} I^{p,q}$ is the corresponding subspace of $\Omega_f$, then

\begin{equation}
\Omega_f=\displaystyle{\bigoplus_{p,q}}J^{p,q} \qquad \tilde{C}|_{J^{p,q}}=(-1)^{p} 
\end{equation}

\vspace{0.5cm}

\noindent
In other words;

\vspace{0.1cm}

\begin{equation}
S(\Phi^{-1}(\omega),\Phi^{-1}(\eta))= * \times \ \text{res}_{f,0}(\omega,\tilde{C}.\eta), \qquad 0 \ne * \in \mathbb{C}
\end{equation}

\end{theorem}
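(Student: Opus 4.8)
The plan is to factor both sides of the asserted identity through a single sesquilinear pairing on the Gauss--Manin system, namely K.~Saito's higher residue pairing on the Brieskorn lattice $H''$, and then to trace precisely how that pairing is transported by the three maps composing $\Phi$. First I would recall that $H''$ carries a pairing $K(\cdot,\cdot)$ with values in the formal series ring in $\partial_t^{-1}$, sesquilinear for the involution $\partial_t^{-1}\mapsto-\partial_t^{-1}$, satisfying $K(\partial_t^{-1}u,v)=\partial_t^{-1}K(u,v)=-K(u,\partial_t^{-1}v)$, and whose leading coefficient in the $\partial_t^{-1}$-adic expansion is, up to a nonzero constant, exactly the Grothendieck residue $\mathrm{res}_{f,0}$ on $\Omega_f=Gr_V^{\bullet}H''/\partial_t^{-1}H''$. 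This last fact is K.~Saito's theorem on the leading term of the higher residue pairing, and it already supplies the bottom of the identification together with part of the constant $*$.

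Next I would express the limit polarization $S$ on $H^n(X_\infty)$ in terms of $K$. Under the nearby map $\psi_\alpha$ the graded pieces $Gr_V^\alpha H''$ are identified with the $\lambda$-eigenspaces of the vanishing cohomology, $\lambda=\exp(-2\pi i\alpha)$, and under this identification the Schmid--Steenbrink polarization is, up to a universal constant collecting the $(2\pi i)$-factors, the pairing induced by $K$ on the associated graded; the nilpotent $N$ and its adjointness for $S$ then account uniformly for both the $\lambda\neq 1$ and $\lambda=1$ parts described by the restricted exact sequence and the formula $S(a,b)=S_Y(a_Y,(-N_Y)b_Y)$. This is the content of the comparison in \cite{H1}, \cite{V}.

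The \emph{heart of the argument} is the computation of $S(\Phi^{-1}\omega,\Phi^{-1}\eta)$ for $\omega\in J^{p,q}$ and $\eta\in J^{p',q'}$. Since $\hat\Phi_\lambda^{p,q}=\partial_t^{p-n}\circ\psi_\alpha$, transporting $S$ back to $\Omega_f$ amounts to inserting the powers $\partial_t^{\,p-n}$ and $\partial_t^{\,p'-n}$ into the two arguments of $K$. Because $K$ converts $\partial_t^{-1}$ into $-\partial_t^{-1}$ on the second argument, each such power contributes a controlled sign; the pairing is nonzero only for complementary Hodge types, so $p'=n-p$, and after absorbing the global factor $(-1)^n$ into $*$ the accumulated sign on the component $J^{p,q}$ is exactly $(-1)^p$. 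This is precisely the operator $\tilde C$ with $\tilde C|_{J^{p,q}}=(-1)^p$. The projection $pr$ onto $Gr_V^{\bullet}H''/\partial_t^{-1}H''$ then selects the leading coefficient of $K$, which by the first step is $\mathrm{res}_{f,0}$. Combining the three steps gives
\[
S(\Phi^{-1}\omega,\Phi^{-1}\eta)=*\cdot \mathrm{res}_{f,0}(\omega,\tilde C\,\eta),\qquad 0\neq *\in\mathbb{C}.
\]

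The main obstacle is the sign-and-filtration bookkeeping of this last step: one must verify that the powers of $\partial_t$ hidden in $\hat\Phi$, together with the $V$-filtration shift $I^{p,q}_\lambda\to Gr_V^{\alpha+n-p}$ and the sesquilinearity of $K$, combine to give exactly $(-1)^p$ rather than some other type-dependent factor, and that the leading-term identification with the Grothendieck residue survives this transport intact. The remaining ingredients---the existence and properties of $K$, Saito's leading-term theorem, and the Schmid polarization formula---enter only as black boxes.
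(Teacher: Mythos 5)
The paper does not actually prove this theorem in the text: it is quoted from \cite{R}, and the only indication given is the remark that the proof ``is a generalization of an argument in \cite{CIR}, in the quasi-homogeneous case.'' So there is no in-text argument to compare yours against line by line. That said, your strategy --- factor both pairings through K.~Saito's higher residue pairing $K$ on the Brieskorn lattice, invoke Saito's leading-term theorem to recover $\mathrm{res}_{f,0}$ on $\Omega_f \cong Gr_V^{\bullet}H''/\partial_t^{-1}H''$, and use the Varchenko/Hertling comparison to express $S$ through $K$ on the $V$-graded pieces --- is exactly the standard route in the literature the paper leans on (\cite{V}, \cite{H1}), and is very plausibly the skeleton of the cited proof.

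The difficulty is that your sketch stops precisely where the theorem begins. Everything before your ``heart of the argument'' is quotation of black boxes; the actual content of the statement is the claim that, after transporting $S$ through $\hat{\Phi}^{p,q}_{\lambda}=\partial_t^{p-n}\circ\psi_{\alpha}$, all type- and eigenvalue-dependent factors collapse into one global nonzero constant times the single operator $\tilde{C}|_{J^{p,q}}=(-1)^p$. You assert this and then concede in your last paragraph that verifying it is ``the main obstacle.'' It is a genuine obstacle, not routine bookkeeping: in the Varchenko/Hertling comparison the constant relating $K$ on $Gr_V^{\alpha}\times Gr_V^{\beta}$ to $S$ on the corresponding eigenspaces involves Gamma-type factors depending on $\alpha$, not merely signs, so one must check that these cancel against the conjugate factors for $\beta$ with $\alpha+\beta\in\{n,n+1\}$ before a single constant can emerge. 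Moreover, on the eigenvalue-one part $S$ is not induced directly but is defined by $S(a,b)=S_Y(a_Y,(-N_Y)b_Y)$; the extra factor of $N$ shifts weights and hence the relevant power of $\partial_t$, so the cases $\lambda\neq 1$ and $\lambda=1$ do not obviously combine ``uniformly'' as you claim. Until these two verifications are carried out, your argument establishes only a formula of the shape $S(\Phi^{-1}\omega,\Phi^{-1}\eta)=c(p,q,\lambda)\,\mathrm{res}_{f,0}(\omega,\eta)$ for some nonzero type-dependent constants, not that $c(p,q,\lambda)=*\cdot(-1)^p$ --- which is the whole point of the theorem.
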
 

\vspace{0.5cm}

\noindent
The proof of the Theorem 3.2 is a generalization of an argument in \cite{CIR}, in the quasi-homogeneous case. 

\vspace{0.5cm}

\begin{corollary} (Riemann-Hodge bilinear relations for Grothendieck pairing on $\Omega_f$) \cite{R}
Assume $f:\mathbb{C}^{n+1} \to \mathbb{C}$ is a holomorphic germ with isolated singularity. Suppose $\mathfrak{f}$ is the corresponding map to $N$ on $H^n(X_{\infty})$, via the isomorphism $\Phi$. Define 

\[ P_l=PGr_l^W:=\ker(\mathfrak{f}^{l+1}:Gr_l^W\Omega_f \to Gr_{-l-2}^W\Omega_f) \]

\vspace{0.5cm}

\noindent
Going to $W$-graded pieces;
\begin{equation}
\widehat{Res}_l: PGr_l^W \Omega_f \otimes_{\mathbb{C}} PGr_l^W \Omega_f \to \mathbb{C}
\end{equation}

\vspace{0.5cm}

\noindent
is non-degenerate and according to Lefschetz decomposition 

\[ Gr_l^W\Omega_f=\bigoplus_r \mathfrak{f}^r P_{l-2r} \]

\vspace{0.5cm}

\noindent
we will obtain a set of non-degenerate bilinear forms,

\begin{equation}
\widehat{Res}_l \circ (id \otimes \mathfrak{f}^l): P Gr_l^W \Omega_f  \otimes_{\mathbb{C}} P Gr_l^W \Omega_f  \to \mathbb{C}, 
\end{equation} 

\vspace{0.1cm}

\begin{center}
$\widehat{Res}_l=res_{f,0}\ (id \otimes \tilde{C} .\  \mathfrak{f}^l)$
\end{center}

\vspace{0.5cm}

\noindent
where $\tilde{C}$ is as in 3.2, such that the corresponding hermitian form associated to these bilinear forms is positive definite. In other words, 

\vspace{0.5cm}

\begin{itemize}

\item $\widehat{Res}_l(x,y)=0, \qquad x \in P_r, \ y  \in P_s, \ r \ne s $

\item If $x \ne 0$ in $P_l$, 

\[ Const \times res_{f,0}\ (C_lx,\tilde{C} .\  \mathfrak{f}^l .\bar{x})>0  , \ \ \ \ \ \ \ \ Const \in \mathbb{C}\]

\noindent
where $C_l$ is the corresponding Weil operator, and the conjugation is as in (7). 

\end{itemize}

\vspace{0.5cm}

\end{corollary}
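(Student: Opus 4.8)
The plan is to transfer the whole statement, via the isomorphism $\Phi$, to the vanishing cohomology $H^n(X_\infty)$, where it becomes the classical Riemann--Hodge bilinear relations for the polarized limit mixed Hodge structure, and then to invoke the theorem of Schmid and Cattani--Kaplan--Schmid (in the form recorded in \cite{H1}) that these relations hold. Concretely, I would first apply Theorem 3.2: the isomorphism $\Phi$ carries the residue pairing $\widehat{Res}_{f,0}$ to the polarization form $S$ up to the nonzero constant $*$, sends the nilpotent $\mathfrak{f}$ on $\Omega_f$ to $N$ on $H^n(X_\infty)$, the Deligne bigrading $J^{p,q}$ to $I^{p,q}$, and, by the Definition of the MHS on $\Omega_f$, the conjugation $\bar{\cdot}$ to complex conjugation. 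Consequently the weight filtration $W_\bullet\Omega_f$ is carried to the monodromy weight filtration of $N$, the primitive subspaces $P_l=\ker(\mathfrak{f}^{l+1})$ to the primitive parts of $Gr^W_\bullet H^n(X_\infty)$, and the forms $\widehat{Res}_l\circ(\mathrm{id}\otimes\mathfrak{f}^l)$ to the forms $S_l(x,y):=S(x,N^l y)$ up to the same constant. This reduces everything to facts about the polarized MHS.

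Second, I would recall that $\bigl(H^n(X_\infty),W_\bullet,F^\bullet,N,S\bigr)$ is a polarized mixed Hodge structure. By the construction of the Steenbrink limit MHS the filtration $W_\bullet$ is the monodromy weight filtration of $N$ centered at the appropriate weight, so the hard-Lefschetz isomorphisms $N^l:Gr^W_{w+l}\to Gr^W_{w-l}$ and the Lefschetz decomposition $Gr^W_l=\bigoplus_r N^r P_{l-2r}$ hold. Since the variation of Hodge structure on the punctured disc is polarized by $S$ (coming geometrically from $S_Y$ on the compactified fibration through the invariant-cycle sequence of the excerpt), Schmid's theorem gives that each $S_l$ polarizes the pure Hodge structure on the primitive part $P_l$: distinct primitive parts are $S$-orthogonal and $S_l(Cx,\bar x)>0$ for $0\neq x\in P_l$, where $C$ is the Weil operator. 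I would run this separately on the unipotent part $H^n(X_\infty)_1$ and the non-unipotent part $H^n(X_\infty)_{\neq 1}$, with the respective $(-1)^{n+1}$- and $(-1)^n$-symmetry of $S$ and the $(-N_Y)$-twist that defines $S$ on the $\lambda=1$ sector.

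Third, I would read these two relations back through $\Phi$. The orthogonality $\widehat{Res}_l(P_r,P_s)=0$ for $r\neq s$ is exactly the $S$-orthogonality of distinct primitive parts, which holds because $N$ is an infinitesimal isometry lowering weight by two. The positivity $Const\times \mathrm{res}_{f,0}(C_l x,\tilde C\,\mathfrak{f}^l\,\bar x)>0$ is the transport of $S_l(Cx,\bar x)>0$: under $\Phi$ the operator $\tilde C$ of Theorem 3.2, acting as $(-1)^p$ on $J^{p,q}$, together with the Weil operator $C_l$ of the primitive piece reproduce the Weil operator $C$ of the limit MHS, while $Const$ absorbs the nonzero factor $*$. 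Non-degeneracy of each $\widehat{Res}_l$ then follows at once, since $S$ is non-degenerate and $N^l$ is an isomorphism on the graded pieces.

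The main obstacle will be the bookkeeping of signs, constants, and the eigenvalue decomposition: one must check that the factor $*$ of Theorem 3.2, the operator $\tilde C$, and the Weil operator $C_l$ assemble into precisely the positive Hermitian polarization form of the Cattani--Kaplan--Schmid structure, uniformly over the $\lambda=1$ and $\lambda\neq1$ sectors. Because the symmetry of $S$ differs by a sign between these sectors and the unipotent sector carries the extra $(-N_Y)$-twist, I would have to verify that the real constant rendering the Hermitian form positive is globally consistent and that the twist does not spoil definiteness; this is exactly the point where the full strength of the polarization theorem for limit mixed Hodge structures, as in \cite{H1} and \cite{R}, is needed.
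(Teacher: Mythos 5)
The paper itself gives no proof of this corollary, deferring entirely to \cite{R}; but your proposal follows exactly the intended derivation: transport the pairing, the nilpotent operator, the weight filtration, and the conjugation through $\Phi$ using Theorem 3.2 and the definition of the MHS on $\Omega_f$, then invoke the polarization of the limit mixed Hodge structure on $H^n(X_{\infty})$ as constructed at the start of Section 3 (via the compactification $Y$, the form $S_Y$, the invariant-cycle sequence, and the $(-N_Y)$-twist on the unipotent part), as in \cite{H1}. Your closing caveats --- the $\lambda=1$ versus $\lambda\neq 1$ sectors, the centering of the monodromy weight filtration, and the assembly of the constants $*$, $\tilde C$, $C_l$ into a genuinely positive Hermitian form --- are precisely the delicate points, and your outline handles them correctly.
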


\noindent
Note that the map 

\[ A_f=\dfrac{\mathcal{O}_X}{\partial f} \to \Omega_f ,\qquad f \mapsto fdx_0...dx_n \]

\vspace{0.5cm}

\noindent
is an isomorphism. Thus, the above corollary would state similarly for $A_f$. 

\vspace{0.5cm}

\section{Chern character (Denis trace map) for isolated hypersurface singularities}

\vspace{0.5cm}

The Hochschild chain complex of $MF(R,f)$ is quasi-isomorphic to the Koszul complex of the regular sequence $\partial_0 f,..., \partial_n f$. In particular the Hochschild homology (and also the Hochschild cohomology) of 2-periodic dg-category $MF(R,f)$ is isomorphic to the module of relative differentials or the Jacobi ring of $f$, \cite{D}. 
 
\vspace{0.5cm}

\begin{theorem} (T. Dykerhoff) \cite{D}, \cite{PV} The canonical bilinear form on the Hochschild homology of category of matrix factorizations $\mathcal{C}=MF(P,f)$ of $f$, after the identification
\begin{equation}
HH_*MF(P,f) \cong A_f \otimes dx [n]
\end{equation}

\vspace{0.5cm}

\noindent
coincides with
\begin{equation}
\langle g \otimes dx, h \otimes dx \rangle =(-1)^{n(n-1)/2}res_{f,0}(g,h)
\end{equation}
\end{theorem}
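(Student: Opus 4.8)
The plan is to take the Hochschild--Kostant--Rosenberg-type identification $HH_*(MF(P,f)) \cong A_f \otimes dx\,[n]$ as given (it is the content of the quasi-isomorphism with the Koszul complex of $\partial_0 f, \ldots, \partial_n f$ recalled just above) and to compute the canonical pairing through one explicit model, the Koszul resolution of the diagonal bimodule of $\mathcal{C} = MF(P,f)$. Recall that the diagonal $\Delta$ is represented by a Koszul-type matrix factorization of $f\otimes 1 - 1 \otimes f$ over $P \otimes P$, assembled from a decomposition $f(x) - f(y) = \sum_{i=0}^{n}(x_i - y_i)\,g_i(x,y)$ with $g_i(x,x) = \partial_i f$. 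The canonical bilinear form is the Mukai--Shklyarov pairing
\[ \langle -,-\rangle\colon HH_*(\mathcal{C}) \otimes HH_*(\mathcal{C}) \xrightarrow{\ \boxtimes\ } HH_*(\mathcal{C}\otimes \mathcal{C}) \xrightarrow{\ \mathrm{ev}_\Delta\ } k, \]
where $\boxtimes$ is the K\"unneth external product and $\mathrm{ev}_\Delta$ is evaluation against the class of the diagonal $\Delta$; this is well defined because the isolated singularity makes $\mathcal{C}$ smooth and proper with a Calabi--Yau structure, which supplies the self-duality $\mathcal{C}\cong\mathcal{C}^{\op}$ used to identify $\mathcal{C}\otimes\mathcal{C}$ with $\mathcal{C}\otimes\mathcal{C}^{\op}$.

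First I would realize $\boxtimes$ on the Koszul model by tensoring the two diagonal factorizations, and identify $\mathrm{ev}_\Delta$ with the Serre trace functional $HH_0(\mathcal{C}\otimes\mathcal{C}^{\op}) \to k$ induced by the Calabi--Yau pairing. Restricting all tensor factors to the diagonal $x = y$ collapses the $g_i$ to the partials $\partial_i f$, so that the composite is governed entirely by the regular sequence $\partial_0 f, \ldots, \partial_n f$ and the volume form $dx = dx_0 \wedge \cdots \wedge dx_n$. The second step is the local identification: written on the Koszul complex, the trace functional becomes the Grothendieck residue symbol attached to this regular sequence, so that for representatives $g \otimes dx$ and $h\otimes dx$ one obtains
\[ \langle g \otimes dx, h\otimes dx\rangle = c \cdot \operatorname{res}_{f,0}(g,h), \qquad \operatorname{res}_{f,0}(g,h) = \operatorname{Res}\begin{bmatrix} gh\,dx \\ \partial_0 f,\ \ldots,\ \partial_n f \end{bmatrix}, \]
with $c$ a universal constant. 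This is the algebraic incarnation of the Kapustin--Li formula for the boundary pairing of the Landau--Ginzburg $B$-model, and reproducing it is the structural heart of the argument; the non-degeneracy of both sides (the residue symbol is a perfect duality on $A_f$ by local duality) forces $c$ to be a nonzero scalar independent of $g,h$.

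It remains to evaluate the constant $c$, and this I expect to be the main obstacle. The value $c = (-1)^{n(n-1)/2}$ is produced by reconciling three sources of signs: the shift $[n]$ built into the identification, the $\mathbb{Z}/2$-grading of $MF(P,f)$ that enters the Koszul differentials, and the reordering signs in $\boxtimes$ and in the Serre trace coming from the chosen order of the $g_i$ and the orientation of $dx$. The clean way to pin down $c$ is to test the formula on a single explicit factorization, for instance the Koszul factorization of $f = x_0^2 + \cdots + x_n^2$, where both the categorical pairing and the residue can be computed by hand, and to check that the resulting sign is exactly $(-1)^{n(n-1)/2}$; by the universality established in the previous step this determines $c$ in general. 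The residue normalization together with this single computation completes the identification of the two pairings.
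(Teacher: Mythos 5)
The paper offers no proof of this statement at all: it is quoted verbatim as a result of Dyckerhoff and Polishchuk--Vaintrob, with \cite{D} and \cite{PV} doing all the work. So the only meaningful comparison is between your sketch and the arguments in those references, and on that score your architecture is the right one: the canonical form is the Shklyarov--Mukai pairing (K\"unneth followed by evaluation against the stabilized diagonal), the diagonal is modelled by the Koszul factorization built from $f(x)-f(y)=\sum_i (x_i-y_i)g_i(x,y)$, and restriction to $x=y$ reduces everything to the regular sequence $\partial_0 f,\dots,\partial_n f$ and the Grothendieck residue. This is exactly the Kapustin--Li formula as proved in \cite{PV}.

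There is, however, a genuine gap in the way you propose to avoid the hard computation. Knowing that the pairing has the form $(g,h)\mapsto \ell(gh\,dx)$ for some linear functional $\ell$ on $A_f$ (which follows from adjunction over $HH^0\cong A_f$), non-degeneracy only tells you that $\ell$ does not kill the socle. By local duality $\operatorname{Hom}_{\mathbb C}(A_f,\mathbb C)$ is free of rank one over $A_f$ with generator $\mathrm{res}_{f,0}$, so non-degeneracy gives $\ell=\mathrm{res}_{f,0}(u\cdot{-})$ for a \emph{unit} $u\in A_f^\times$, not for a scalar; the assertion that $c$ is a universal constant is precisely what must be proved and cannot be extracted from non-degeneracy. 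Your proposed calibration on $f=x_0^2+\cdots+x_n^2$ then fails to close the gap, because for that germ $A_f\cong\mathbb C$ ($\mu=1$), so the test only sees the constant term of $u$ and cannot distinguish a constant from a general unit. To make the argument work you must either carry out the explicit chain-level computation of the trace on the Koszul model (as in \cite{PV}), or supply a genuine universality statement (naturality of the pairing under the $A_f$-module structure together with an identification of $\ell$ on a set of elements generating $A_f$ over the socle), neither of which is supplied by the steps you list.
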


\vspace{0.5cm}

\noindent
The chern character or Denis trace map is a ring homomorphism 
\begin{equation}
ch:K'_0(X) \to HH_0(X) \cong \Omega_f
\end{equation}

\vspace{0.5cm}

\noindent
where $K_0^{\prime}$ is free abelian group on the isomorphism classes of finitely generated modules modulo relations obtained from short exact sequences. The construction of the chern character map or chern classes is functorial w.r.t flat pull back. In the special case of $i:X \hookrightarrow Y$ the compactification, the following diagram commutes,

\begin{equation}
\begin{CD}
K_0^{\prime}(Y_0) @>ch_Y>> HH_0(Y_0) \cong \Omega_f^Y @>\Phi_Y^{-1}>> H^n(Y_{\infty})\\
@Vi^*VV         @VVi^*V          @VVi^*V\\
K_0^{\prime}(X_0) @>>ch_X> HH_0(X_0) \cong \Omega_f^X @>>\Phi_X^{-1}> H^n(X_{\infty}).
\end{CD}
\end{equation}

\vspace{0.5cm}

\noindent
Given a matrix factorization $(A,B)$ for a maximal Cohen-Macaulay $M$, one can find de Rham representatives for the chern classes. Consider $\mathbb{C}[[x_0,...,x_n]]$ as a $\mathbb{C}[[t]]$-module with $t$ acting as multiplication by $f$. Denote by $\Omega^p$
the module of germs of $p$-forms on $\mathbb{C}^{n+1}$, and let $\Omega_f^p=\Omega^p/(df \wedge \Omega^{p-1})$. One puts $\omega(M)=dA \wedge dB$. The components of the chern character

\begin{equation}
ch_M:=tr(\exp(\omega(M)))= \sum_i \frac{1}{i!}{\omega}^i(M)
\end{equation}

\vspace{0.5cm}

\noindent
are well-defined classes

\begin{equation}
\omega^i(M)=tr((dA \wedge dB)^i) \in \Omega_f^{2i}/(df \wedge \Omega^{2i-1})
\end{equation}

\vspace{0.5cm}

\noindent
There are also odd degree classes

\vspace{0.5cm}

\begin{center}
$\eta^i(M):=tr(AdB(dA \wedge dB)^i) \in \Omega_f^{2i+1}/{\Omega_f^{2i}}$.
\end{center}

\vspace{0.5cm}

\noindent
The group $\Omega_f^{2i+1}/d\Omega_f^{2i}$ can be identified with the cyclic homology $HC_i(P/\mathbb{C}\{t\})$. They fit into the following short exact sequence such that $d\eta^{i-1}=\omega^i(M)$.

\vspace{0.5cm}

\begin{center}
$0 \to \Omega_f^{2i-1}/\Omega_f^{2i-2} \to \Omega^{2i}/(df \wedge \Omega^{2i-1}) \to \Omega^{2i}/\Omega^{2i-1} \to 0$.
\end{center}

\vspace{0.5cm}

\noindent
If the number of variables $n+1$ is even, then a top degree form sits in the Brieskorn module

\vspace{0.5cm}

\begin{center}
$\mathcal{H}_f^{(0)}=\Omega^n/(df \wedge d\Omega^{n-1})$
\end{center}

\vspace{0.5cm}

\noindent
a free $\mathbb{C}[[t]]$-module of rank $\mu$. The higher residue pairing 

\[ K:\mathcal{H}_f^{(0)} \times \mathcal{H}_f^{(0)} \to \mathbb{C}[t, t^{-1}] \]

\vspace{0.5cm}

of K. Saito can be seen as the de Rham realization of the Seifert form of the singularity, \cite{BVS}.

\vspace{0.5cm}

\section{Positivity of Theta pairing-Main Result}

\vspace{0.5cm}

The following theorem was conjectured in \cite{MPSW}.

\vspace{0.5cm}

\begin{theorem} Let $S$ be an isolated hypersurface singularity of dimension $n$. If $n$ is odd, then $(-1)^{(n+1)/2}\Theta$ is positive semi-definite on $G(R)_\mathbb{Q}$, i.e $(-1)^{(n+1)/2}\Theta(M,M) \geq 0$. 
\end{theorem}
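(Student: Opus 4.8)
### Proof Strategy

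The plan is to reduce the positivity of $(-1)^{(n+1)/2}\Theta$ to the Riemann--Hodge bilinear relations established in Corollary 3.3, exactly mirroring the logic of Theorem 2.5 in the homogeneous case but replacing the cohomology of the smooth projective variety $X$ with the Hodge-theoretic data carried by the Jacobi ring $\Omega_f$. First I would recall that by Theorem 4.1 (Dyckerhoff) the Hochschild homology of $MF(P,f)$ is identified with $A_f \otimes dx$, and that under the Denis trace map $ch: K_0'(X) \to HH_0(X) \cong \Omega_f$ of (21), the Hochster pairing $\Theta(M,N)$ is realized, up to a nonzero scalar, as the canonical residue pairing on $\Omega_f$. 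The content of Theorem 4.1 is precisely that this canonical pairing is $(-1)^{n(n-1)/2}\operatorname{res}_{f,0}(g,h)$; so the first genuine step is to verify that $\Theta(M,N)$ pulls back to this residue form, i.e. that the trace map intertwines the $\mathbb{Z}/2$-periodic $\mathrm{Tor}$-computation defining $\Theta$ with the residue pairing on Hochschild homology. This is where the matrix-factorization description from Section 1, together with the de Rham representatives $\omega^i(M)=\operatorname{tr}((dA\wedge dB)^i)$ of (24), does the work.

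Next I would transport the sign and positivity structure through the isomorphism $\Phi$ of (11). The key input is Corollary 3.3: on each primitive piece $PGr_l^W\Omega_f$, the hermitian form built from $\widehat{\operatorname{Res}}_l = \operatorname{res}_{f,0}(\mathrm{id}\otimes \tilde{C}\cdot\mathfrak{f}^l)$ is positive definite, where $\tilde C$ acts as $(-1)^p$ on $J^{p,q}$ by (14). Because $n$ is odd, the image of the chern character $ch^{(n-1)/2}$ lands in the appropriate middle Hodge piece — the analogue of $W = H^{n-1}(X,\mathbb{Q})\cap H^{(n-1)/2,(n-1)/2}(X)$ in Theorem 2.5 — so on that image the Weil operator $\tilde C$ and the factor $(-1)^{n(n-1)/2}$ from Theorem 4.1 combine to produce the global sign $(-1)^{(n+1)/2}$. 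I would then compute this sign bookkeeping explicitly: the relevant classes sit in a single bigraded summand $J^{p,q}$ with $p+q = n-1$ and $p = q = (n-1)/2$ on the unipotent ($\lambda=1$) part, and a short parity computation shows $(-1)^{n(n-1)/2}(-1)^p$ reduces to $(-1)^{(n+1)/2}$.

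Finally, combining these, for any module $M$ with $v = ch(M)$ I would have
\begin{equation}
(-1)^{(n+1)/2}\Theta(M,M) = \mathrm{Const}\times \operatorname{res}_{f,0}(v, \tilde C\cdot \mathfrak{f}^l \bar v) \geq 0,
\end{equation}
with the right-hand side being exactly the positive-definite hermitian form of Corollary 3.3 restricted to the image of the chern character, giving semi-definiteness on all of $G(R)_{\mathbb{Q}}$ since the image need not be everything. The main obstacle I anticipate is the second step: controlling precisely which Hodge-theoretic weight and bigrading components the image of the Denis trace character occupies, and proving that it lies in the single primitive piece where Corollary 3.3 guarantees definiteness rather than spreading across pieces of mixed sign. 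Establishing this Hodge-type constraint on $\operatorname{im}(ch)$ — the analogue of the classical statement that Chern characters are $(p,p)$-classes — is the heart of the argument and the place where the isolated-singularity hypothesis and the structure of the limit mixed Hodge structure must be used carefully.
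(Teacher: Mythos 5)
Your reduction of $\Theta$ to the residue pairing on $\Omega_f$ via Theorem 4.1 and the isomorphism $\Phi$ of Theorem 3.2 coincides with the paper's first move, but from there your route diverges from the paper's, and it has a genuine gap exactly where you flag the ``main obstacle.'' You propose to conclude by restricting the positive-definite hermitian form of Corollary 3.3 to $\mathrm{im}(ch)$, which requires knowing that the image of the Denis trace lies inside a single primitive bigraded piece $P_l \cap J^{p,p}$ of the \emph{mixed} Hodge structure on $\Omega_f$; you name this as the heart of the argument but give no mechanism for proving it. Without it nothing follows: the bilinear relations of Corollary 3.3 produce signs that alternate with $p$ across the summands $J^{p,q}$ and with the primitive level $l$, so a class spread over several pieces can pair with itself with either sign. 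The paper supplies precisely this missing ingredient by a different device: it lifts the chern class to the compactification $Y$ through the functoriality square for $i^*$, observes that the lifted class is a Hodge cycle in $H^n(Y_\infty)$, and uses that $N_Y$ has type $(-1,-1)$ together with the vanishing of $S_Y$ between Hodge classes and $N_Y$-shifted pieces to force the class into the non-unipotent part $H^n_{\neq 1}$, where $S=S_Y$ is the restriction of a pure polarization and positivity is the already-established projective statement of \cite{MPSW} quoted at the end of Section 2. The compactification step is absent from your proposal, and it is the step that does the work.

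Two further discrepancies are worth noting. First, you localize the relevant classes on the unipotent ($\lambda=1$) part of the spectrum, whereas the paper's argument concludes they lie in $H^n_{\neq 1}$; at least one of these localizations must be argued, and you argue neither. Second, your claimed parity bookkeeping does not close: with $p=(n-1)/2$ one gets $(-1)^{n(n-1)/2}(-1)^p=(-1)^{(n^2-1)/2}=+1$ for every odd $n$, which does not equal $(-1)^{(n+1)/2}$ when $n\equiv 1 \pmod 4$. The remaining step you defer --- that the Denis trace intertwines the $\mathrm{Tor}$-theoretic definition of $\Theta$ with the Hochschild residue pairing --- is also taken for granted in the paper, so the decisive difference between your proposal and the paper's proof is the localization of $\mathrm{im}(ch)$ inside the limit mixed Hodge structure, which the paper handles by passing to $Y$ and you leave open.
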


\begin{proof} By additivity of $\Theta$ on each variable, we may replace $M,N$ by maximal Cohen-Macaulay modules. According to 4.1 and 3.2 the determination of the sign of $\Theta$ amounts to understanding how the image of chern classes look like in the MHS of $\Omega_f$. By Theorem 3.2 it amounts to the same things for the image in $H^n(X_{\infty})$ under the isomorphism $\Phi$. The following diagram is commutative by the functorial properties of chern character.

\begin{equation}
\begin{CD}
K_0^{\prime}(Y_0) @>\Phi_Y^{-1} \circ ch_Y>> H^n(Y_{\infty})\\
@Vi^*VV                   @VVi^*V\\
K_0^{\prime}(X_0) @>>\Phi_X^{-1} \circ ch_X> H^n(X_{\infty}).
\end{CD}
\end{equation}

\vspace{0.5cm}

\noindent
We are assuming that $i^*$ is surjective. By what was said, the chern class we are concerned with, is a Hodge cycle. The commutativity of the above diagram allows us to replace the pre-image of the chern character for $X$, with similar cycle upstairs. Because the polarization form $S_X$ was defined via that of $S_Y$. Thus, if 

\[ H^n(Y_{\infty})=\oplus_{p+q=n} H^{p,q} \]

\vspace{0.5cm}

\noindent
is the Hodge decomposition, the only non trivial contribution in the cup product will be for the $H^{n/2,n/2}$, and the polarization form is evidently definite on this subspace (Hodge cycles). Because the map $N_Y$ is of type $(-1,-1)$ for the Hodge structure of $H^n(Y_{\infty})$ and the polarization $S_Y( H^{n/2,n/2}, H^{n/2-1,n/2-1})=0$ for obvious reasons, the corresponding chern class should lie in $H_{\ne 1}^n$. In this way one only needs to prove the positivity statement for Hochster $\Theta$ when the chern character is in $H_{Y,\ne 1}$, and this is the content of Theorem 2.8.
\end{proof}

\vspace{0.5cm}

\end{document}